\documentclass[letterpaper, 10pt, conference]{cssconf}  
\IEEEoverridecommandlockouts
\overrideIEEEmargins

\usepackage{epsfig}
\usepackage{amsmath}
\usepackage{nomencl}
\usepackage{subfig}
\usepackage{algo}
\DeclareMathOperator*{\argmin}{arg\,min}

\newtheorem{lemma}{Lemma}

\makenomenclature

%

\title{\LARGE \bf Pursuit on a Graph Using Partial Information}
\author{K. Krishnamoorthy, D. Casbeer, P. Chandler, and M. Pachter  
\thanks{Corresponding author: K. Krishnamoorthy {\tt\small krishnak@ucla.edu}}
\thanks{This work is approved for public release, distribution unlimited: 88ABW-2014-4329}
\thanks{K. Krishnamoorthy is with the InfoSciTex corporation, Dayton, OH 45431}
\thanks{D. Casbeer is with the Autonomous Control Branch, Air Force Research Laboratory, Wright-Patterson AFB, OH 45433}
\thanks{P. Chandler (Retd.) was with the Autonomous Control Branch, Air Force Research Laboratory, Wright-Patterson AFB, OH 45433}
\thanks{M. Pachter is with the Department of Electrical Engineering, Air Force Institute of Technology, Wright-Patterson AFB, OH 45433}
}
\begin{document}

\maketitle
\thispagestyle{empty}
\pagestyle{empty}

\begin{abstract}
The optimal control of a ``blind"  pursuer searching for an evader moving on a road network and heading at a known speed toward a set of goal vertices is considered. To aid the ``blind" pursuer, certain roads in the network have been instrumented with Unattended Ground Sensors (UGSs) that detect the evader's passage.
When the pursuer arrives at an instrumented  node, the UGS therein informs the pursuer if and when the evader visited the node. The pursuer's motion is not restricted to the road network. In addition, the pursuer can choose to wait/loiter for an arbitrary time at any UGS location/node. At time $0$, the evader passes by an entry node on his way towards one of the exit nodes. The pursuer also arrives at this entry node after some delay and is thus informed about the presence of the intruder/evader in the network, whereupon the chase is on - the pursuer is tasked with capturing the evader.
Because the pursuer is ``blind", capture entails the pursuer and evader being collocated at an UGS location. If this happens, the UGS is triggered and this information is instantaneously relayed to the pursuer, thereby enabling capture.
On the other hand, if the evader reaches one of the exit nodes without being captured, he is deemed to have escaped. We provide an algorithm that computes the maximum initial delay at the entry node for which capture is guaranteed. The algorithm also returns the corresponding optimal pursuit policy.
\end{abstract}
\printnomenclature

\nomenclature{$P_k$}{Evader path indexed by $k=1,\ldots,n$ }%
\nomenclature{$|P_k|$}{Length of path $k$ }%
\nomenclature{$\ell_k$}{number of UGSs along path $k$ }%
\nomenclature{$\mathcal P_j$}{Set of path indices that contain UGS $j$ }%
\nomenclature{$j$}{UGS index taking values $1,\ldots,m$ }%
\nomenclature{$d_V(i,j)$}{Pursuer travel time from node $i$ to $j$ }%
\nomenclature{$\mathcal T_k(i)$}{Evader arrival time at $i^{th}$ UGS along path $k$}
\nomenclature{$\mathcal L_j(k)$}{Evader time of visit to UGS $j$ along path $k$ }
\nomenclature{$\mathcal I$}{Evader path uncertainty set}
\nomenclature{$p$}{Pursuer UGS location index}
\nomenclature{$u$}{Pursuer control decision }
\nomenclature{$y$}{Pursuer observation at UGS}

\section{Introduction}
\label{sec:intro}
We are concerned with capturing a ground target moving on a road network. The operational scenario is as follows. The access road network to a restricted (protected) zone is instrumented with Unattended Ground Sensors (UGSs), placed at critical locations. As the target, referred to as the ``evader", passes by an UGS, the UGS is triggered. A triggered UGS turns, say, from \emph{green} to \emph{red} and records the evader's time of passage.
The UGSs are placed on certain edges of the graph.
We assume that the speed of the evader, the layout of the road network and the placement of the UGSs is known to the pursuer. When the pursuer arrives at an UGS location, the information stored by the UGS is uploaded to the pursuer, namely, the green/red status of the UGS and, if the UGS is red, the time elapsed (delay) since the evader's passage.
The evader can be captured in one of two ways: either the evader and pursuer synchronously arrive
at an UGS location, or the pursuer is already loitering/waiting at an UGS location when the evader
arrives there. In both cases, the UGS is triggered, instantaneously informs the pursuer, and the evader is captured. 
The decision problem for the pursuer is to select which UGS to visit next, including possibly staying at the current UGS location (and if so, for how long?) awaiting the arrival of the evader. The decisions are made by the pursuer at discrete time instants, immediately after arriving at and interrogating an UGS. Without loss of generality, we assume the evader is traveling on the road network at unit speed. The pursuer, on the other hand, is not restricted to be on the road network, although only upon visiting an UGS can he update the information state. 
In addition, the pursuer can also wait/loiter at an UGS location for an arbitrary amount of time.
\begin{figure}[h]
\begin{center}
\includegraphics[width=\linewidth]{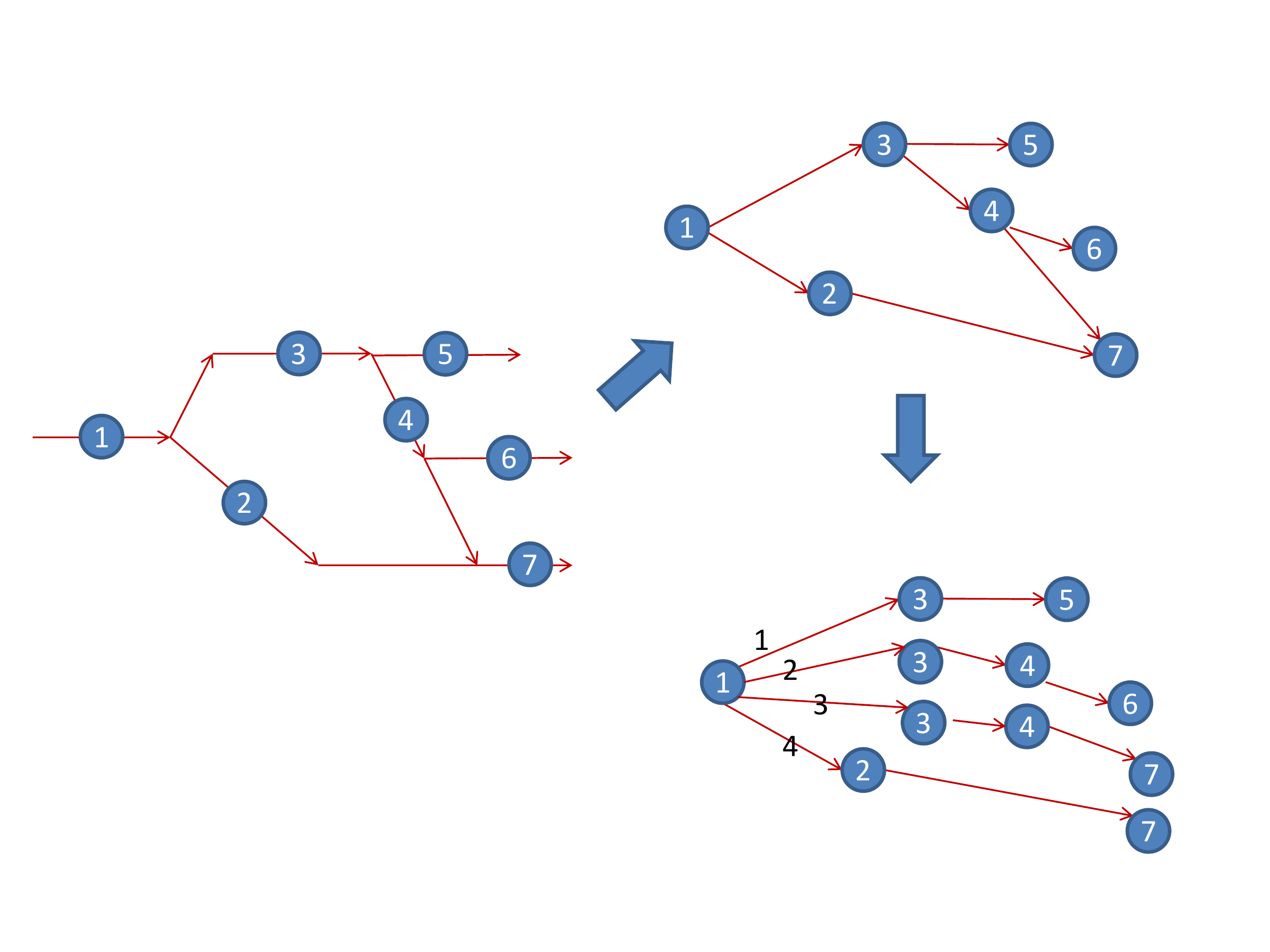}
\caption{Road Network, UGSs Graph, and Four Possible Evader Paths} \label{fig:graph}
\end{center}
\end{figure}

In Fig.~\ref{fig:graph} an illustrative road network is shown. 
The roads are shown in red (arrows indicate direction of travel) and the numbered UGSs are blue circles. Let there be $m$ UGSs on the network, indexed by $j= 1,\ldots,m$. %
Since information is only available (and capture only possible) at the UGS locations, we focus on the embedded graph, $G(\mathcal U,E)$, that has the UGS locations as vertices, i.e., $\mathcal U=\{1,\ldots,m\}$. We make the critical assumption that $G$ is a directed acyclic graph.
To visualize the setup, see Fig.~\ref{fig:graph}, where the corresponding graph, $G$, is shown in the top right. Here, node $1$ is the entry node into the network. A directed edge, $e\in E$ between two nodes on the graph has a weight that equals the distance along the road network between the nodes.
For each $j\in\mathcal U$, let $C(j)\subset\mathcal U$ indicate the set of child nodes that the evader can get to from $j$. Let $\mathcal G=\{j:j\in\mathcal U \mbox{ and } C(j)=\emptyset\}$ indicate the set of exit/goal nodes that the evader is heading towards. 
In Fig.~\ref{fig:graph}, nodes $5$, $6$ and $7$ are the exit nodes.
Furthermore, for each $c\in C(j)$, let the distance along the road network between the parent and child node be indicated by $T(j,c)$. Since the evader travels at unit speed, this is also the time taken by the evader to go from node $j$ to its child node $c$. The pursuer's travel time from node $i$ to node $j$ is given by a scaled distance metric, $d_V(i,j)$. For example, it could represent the Euclidean distance between the nodes divided by the pursuer's speed. Here, we allow the metric to be more general, so long as it satisfies the triangle inequality, i.e.,
\begin{equation}
\label{eq:triangle}
d_V(i,j) \leq d_V(i,s) + d_V(s,j),
\end{equation}
for any $i,j,s\in\mathcal U$ and $d_V(j,j)=0$, $\forall j\in\mathcal U$. The above generalization allows us to model different scenarios, e.g., the pursuer could be an Unmanned Air Vehicle (UAV). We assume that the pursuer is faster than the evader, that is, the pursuer's travel time between any UGS and its child node is strictly less than the evader's travel time between the two nodes, i.e., 
\begin{equation}
\label{eq:spdAdv}
d_V(j,c) < T(j,c),\;\forall c\in C(j),\;\forall j\in\mathcal U.
\end{equation}
Without loss of generality, we assume that the evader (upon entering the network ) first visits node $1$ at time $0$ and $1\notin\mathcal G$.
Let there be $n\ (\geq 1)$ possible evader paths denoted by $P_1,\ldots,P_{n}$ emanating from node $1$ and terminating at an exit node. For the example problem, see the enumeration of the $4$ possible evader paths shown on the bottom right of Fig.~\ref{fig:graph}. 
We represent an evader path $P_k$, $1\leq k\leq n$, by the following notation: $P_k=(1\rightarrow s_k^2\rightarrow \ldots \rightarrow s_k^{\ell_k})$, where $s_k^{i}$ is the $i^{th}$ UGS along path $k$ and $s_k^{\ell_k}\in\mathcal G$. 
Here, ${\ell_k}$ is the number of UGSs along path $k$. For example, in Fig. 1 $P_1=(1\rightarrow 3\rightarrow 5)$, so $s_1^2 =3$, $s_1^3 =5$ and ${\ell_1}=3$.

\subsection{Properties of the Evader's Path}
\label{sec:pathprop}
Let $\mathcal T_k(j)$ be the time of arrival of the evader to the $j^{th}$ UGS along path $k$.
\begin{equation}
\label{eq:time2Ugs}
\mathcal T_k(j)  = T(1,s_k^2) + \sum_{r=2}^{j-1}T(s_k^r,s_k^{r+1}),\;j=2,\ldots,\ell_k.
\end{equation}
So, the length of each path is given by $|P_k|=\mathcal T_k({\ell_k})$. If the evader were to pick the shortest path to an exit node, then he would choose, $\bar{k}=\argmin_{k=1}^n|P_k|$.
Since $G$ is a directed acyclic graph, the evader cannot visit any particular UGS more than once. However, it is possible that the evader can reach an UGS, $U \in\mathcal U$ via different paths. So, for
UGS/node $U_j$ in the graph, $j=1,\ldots,m$, we associate the set, $\mathcal L_j=\{\mathcal L_j(1),\ldots,\mathcal L_j(n)\}$, where $\mathcal L_j(k)$ is the time at which the evader would visit node $j$ while traveling along path $k$. Here, time is measured relative to time $0$, when the evader visits node $1$. If node $j$ does not appear in some path $k\in\{1,\ldots,n\}$, then we set the corresponding time, $\mathcal L_j(k)=\infty$. We assume without loss of generality, that $\forall j,\exists k$ such that $\mathcal L_j(k)<\infty$. This condition implies that every UGS appears, at least, in one of the paths. Clearly, if this were not the case, such an  UGS can be removed from consideration. By definition, we have $\mathcal L_1=\{0,\ldots,0\}$, since node $1$ is visited by the evader at time $0$ and along every possible evader path emanating from UGS 1. We also define the set, $\mathcal P_j$, $j=1,\ldots,m$, to be the set of paths that contain node $j$. By definition, $\mathcal P_j =\{k:\mathcal L_j(k)<\infty,\;k=1,\ldots,n\}$ and $\mathcal P_j\neq \emptyset,\;\forall j$.
%
We define the initial uncertainty in evader path information available to the purser to be $\mathcal I_0$.
Since the evader could have taken any one of $n$ paths, $\mathcal I_0=\{1,\ldots,n\}$. Note that this definition of evader position uncertainty  appears to be unusual in that for small initial delays, the pursuer will know where the evader is on the road that contains node $1$ (e.g., see left plot in Fig.~\ref{fig:graph}) and so, there is no uncertainty in his position/state; but we still say his path is uncertain in that $\mathcal I _0=\{1,2,3,4\}$ - see bottom right plot in Fig.~\ref{fig:graph}. This is an important point: because the tacitly assumed  information pattern is s.t. the evader has no situational awareness, one could argue that the evader might as well decide on his ``strategy", namely, what path he will take, at $t=0$ - in other words, the evader operates in $open$-$loop$. So we stipulate that at each point in time, and based on the evidence collected so far, the information of the pursuer is the currently feasible set of possible paths, one of which the evader, having made his choice at time $0$, is currently traveling on. This definition of path uncertainty, meaning, the uncertainty about which of the $n$ paths the evader is actually traveling on, results in a significant simplification of the underlying coupled estimation and control problem. Hereafter, we shall use the words uncertainty and information interchangeably with reference to the set of complete paths that the evader is possibly traveling on.

\subsection{Evolution of System State}
\label{sec:StateEval}
Even though the pursuer and evader motion evolve in continuous time, decisions are made (by the pursuer only) at discrete time steps. The pursuer makes these decisions immediately after reaching an UGS location at time $t$ and obtaining the measurement $y$ therein: $y=-1$ for ``green", or $y=d$ for ``red" + delay $d$. 
Let the pursuer position at decision time $t$ be specified by the UGS index, $p\in\{1,\ldots,m\}$. 
The decision variable, $u$ indicates the UGS location $u\in\{1,\ldots,m\}$ that the pursuer should visit next. 

The control action $u$ is dependent on the current time, pursuer position and most recent information state: $u=\mathcal F(t,\mathcal I,p)$, where  the mapping $\mathcal F$ is to be determined by an optimality principle - see \eqref{eq:1stepTime} in the sequel.
So, the pursuer's position and pursuer decision time evolve according to:
\begin{eqnarray}
\label{eq:decTime}
p^+ &=& u,\nonumber \\
t^+ &=& \left\{
\begin{array}{l}t + d_V({p},{u}),\;u\neq p\\
        \min_{k\in \mathcal I} \mathcal L_p(k),\;u=p
        \end{array}\right.
\end{eqnarray}
So, if the pursuer decides to stay put at the current location, the next decision epoch is the earliest possible time at which new information becomes available at the current UGS $p$. We denote by $y$ the measurement the pursuer made at node ${p}$. The observation could either be a red UGS $p$ with delay $d\geq 0$ i.e., $y=d$, or a green UGS $p$; whereupon the observation is denoted by $y=-1$. Note that the pursuer may choose $u=p$ only if the observation $y=-1$. If the pursuer observes a red UGS, it confirms that the evader did pass through UGS $p$ and there is no value in the pursuer staying at $p$ any longer. Indeed, it would be detrimental to the search effort (in terms of time to capture).

Suppose the evader path uncertainty information available to the pursuer at $p$ is  $\mathcal I$.
We calculate the information/path uncertainty set at time $t^+$ for the two possible observations at $u$ as follows:
\\
\textbf{Red} (${ y^+=d\geq 0}$):
The pursuer will observe a red UGS with delay $d\geq0$ where $d\in\{s|s=t^+-\mathcal L_u(k),s\geq 0,k\in\mathcal P_u\cap \mathcal I\}$. This implies that the evader was at the location of UGS $u$ at time $t^+-d$. Therefore, the information at time $t^+$ will be:
\begin{equation}
\label{eq:redUpdate}
\mathcal I^+ (u,d)= \{k: k\in\mathcal I,\mathcal L_{u}(k)=t^+-d\}.
\end{equation}
So we only retain those paths from $\mathcal I$ that are consistent with the evader passing through ${u}$ at time $t^+-d$.
\\
\textbf{Green} (${ y^+= -1}$): The pursuer will observe a green UGS at time $t^+$. This implies that the evader has not visited ${u}$ thus far. Therefore, the information update is given by:
\begin{equation}
\label{eq:greenUpdate}
\mathcal I^+ (u,-1)= \{k: k\in\mathcal I,\mathcal L_{u}(k)>t^+\}.
\end{equation}
So we only retain those paths from $\mathcal I$ that are consistent with the evader passing through ${u}$ at a time greater than $t^+$.
The game will terminate at UGS $p^+$ if at time $t^+$ the new observation is $y=0$. It is also possible that having periodically updated the path uncertainty set $ \mathcal I$ and reapplied \eqref{eq:decTime}, the pursuer stayed put at UGS $p$ until time $\max_{k\in \mathcal I}\mathcal L_p (k)$ whereupon if the last observation $y=0$ the evader is captured. If this observation is $y=-1$ instead, implying that the evader did not take any of the paths that pass through $p$, the control $u\neq p$ is applied and the pursuer finally moves on. The crucial point here is that although ``to wait or not" is a decision to be made by the pursuer, the waiting time itself is \emph{purely} determined by the evader arrival times and pursuer observations. This comes about because of the assumptions: 1) constant evader speed and 2) acyclic graph.

\section{Optimization Problem Statement}
\label{sec:probstat}
The evader passes by node $1$ at time $0$. The pursuer arrives for the $1^{st}$ time at node $1$ at time $t_0>0$ and is tasked with capturing the evader. Obviously, (see Fig.~\ref{fig:graph}) when $t_0$ is small capture is possible, given the pursuer's speed advantage \eqref{eq:spdAdv}. On the other hand, if $t_0$ is large, the evader will likely escape, no matter what the pursuer does. We are interested in computing the maximum initial delay $t_0$ for which a capture guarantee exists. This is valuable information in an operational scenario, for the following reason. The road network could lead to a protected area, that is being guarded against (ground) intrusions by security forces and the pursuer could be an UAV. In this case, it would be advantageous to know what is the maximum delay for which a capture guarantee exists. If the actual initial delay measured by the UAV exceeds the maximum, a security alert ``close the gates!" could be issued and additional resources allocated to intercept the threat. On the other hand, if the actual delay encountered is no greater than the maximum, then the UAV can autonomously pursue the evader, isolate it and transmit the captured image to a human operator for further action.

To pose this as an optimization problem,
we introduce the following concept.
Let $\mathcal D(1|\mathcal I_0)>0$ be the \emph{latest} time that the pursuer can arrive at/leave node $1$ and still capture the evader, knowing that the evader could have taken any one of $n$ paths, $P_1,\ldots,P_n$. Again, time is measured relative to time $0$ which is the time the evader passes node $1$. The evader path information available to the pursuer at node $1$ is given by $\mathcal I_0=\{1,\ldots,n\}$. In a similar fashion, for any UGS, $j=1,\ldots,m$, we define $\mathcal D(j|\mathcal I)$ to be the latest time the pursuer can arrive at/leave node $j$ and guarantee capture, armed with the path information $\mathcal I$. Note that the arrival time to an UGS $=$ the departure time, also in the case where the UGS is ``green" and the pursuer decides to stay put. If the pursuer arrives at node $j$ at time $t>0$ and $t\leq \mathcal D(j|\mathcal I)$, let $\mu(j|\mathcal I)\in\{1,\ldots,m\}$ be the corresponding UGS index to which the pursuer should head towards next, to enable capture. 

Recall that each path $P_k$, $k=1,...,n$, contains an exit node and the exit node of path $k$ is $s_k^{\ell_k}$. For the exit node $s_k^{\ell_k}$, the latest time that the pursuer can arrive there and still guarantee capture, knowing that the evader has taken path $P_k$ is clearly $|P_k|$, the time at which the evader reaches the said node. Thus,
\begin{equation}
\label{eq:bndMaxDel}
\mathcal D( s_k^{\ell_k}|\{k\}) = |P_k|.
\end{equation}
Concerning the pursuer's strategy $\mu$: if $t<|P_k|$, $\mathcal \mu( s_k^{\ell_k}|\{k\})=  s_k^{\ell_k}$ i.e., the pursuer stays put at the exit node.
In general, if the path information is the singleton $\{k\}$, the corresponding latest pursuer arrival time for node $j$, $j=1,\ldots,m,$ is given by:
\begin{eqnarray}
\label{eq:bndMaxDelAll}
\mathcal D(j| \{k\}) &=&\max_{i=1}^{\ell_k}\left[\mathcal T_k({i})-d_V(j,s_k^i)\right],\nonumber \\
&=& |P_k|-d_V(j,s_k^{\ell_k}), \;\forall k.
\end{eqnarray}
The second equality above follows from the triangle inequality \eqref{eq:triangle} and speed advantage \eqref{eq:spdAdv} assumptions. In essence, the pursuer reaches the exit node $ s_k^{\ell_k}$ of path $k$ from node $j$, just in time to capture the evader. So, the corresponding ``go to" UGS is given by, $u=\mu (j| \{k\}) =s_k^{\ell_k}$.
\begin{lemma}
If the path uncertainty set $\mathcal I$ satisfies $\mathcal I\subseteq \mathcal{P}_j$ for some $j\in\{1,...,m\}$, then:
\label{lem:lowbnd}
\begin{eqnarray}
\label{eq:bndIntUgs}
\mathcal D(j| \mathcal I) &\geq& \min_{k\in\mathcal I}\mathcal L_j(k).
\end{eqnarray}
\end{lemma}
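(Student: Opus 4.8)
The plan is to prove the bound by exhibiting a single admissible pursuit policy --- ``wait at $j$'' --- and verifying that it guarantees capture whenever the pursuer reaches $j$ no later than $\min_{k\in\mathcal I}\mathcal L_j(k)$; since $\mathcal D(j|\mathcal I)$ is, by definition, the \emph{latest} arrival time at $j$ for which \emph{some} policy guarantees capture, this immediately yields \eqref{eq:bndIntUgs}. Concretely, suppose the pursuer arrives at node $j$ at a time $t$ with $0<t\le\min_{k\in\mathcal I}\mathcal L_j(k)$, carrying path information $\mathcal I\subseteq\mathcal P_j$, and thereafter repeatedly applies the control $u=p=j$ until the UGS at $j$ is triggered. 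I must show capture occurs regardless of which path $k^\star\in\mathcal I$ the evader has (in open loop) committed to.

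First I would extract the consequence of the hypothesis $\mathcal I\subseteq\mathcal P_j$: since $k^\star\in\mathcal I\subseteq\mathcal P_j$ we have $\mathcal L_j(k^\star)<\infty$, i.e.\ the evader really does pass through $j$, and it does so at time $\mathcal L_j(k^\star)\ge\min_{k\in\mathcal I}\mathcal L_j(k)\ge t$; hence the evader reaches $j$ no earlier than the pursuer. Next I would unwind the waiting dynamics using \eqref{eq:decTime} and \eqref{eq:greenUpdate}: each time the pursuer elects $u=p=j$, the ensuing decision epoch is $\min_{k\in\mathcal I}\mathcal L_j(k)$, the earliest evader visit time still consistent with $\mathcal I$; if the reading there is ``green'', \eqref{eq:greenUpdate} strictly shrinks $\mathcal I$ by deleting exactly the path(s) attaining that minimum, and the pursuer waits again. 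Iterating, the successive decision epochs march through the distinct finite values of $\{\mathcal L_j(k):k\in\mathcal I\}$ in increasing order, and at every one of them the pursuer is physically at $j$. Because $\mathcal L_j(k^\star)$ is one of those values, $k^\star$ is never removed before the epoch $t^+=\mathcal L_j(k^\star)$ is reached, and at that epoch the evader arrives at $j$ while the pursuer is loitering there (or, in the boundary case $t=\mathcal L_j(k^\star)$, the two arrive synchronously); the UGS is triggered, relays the information instantaneously, and the evader is captured.

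Concluding, the policy $\mu(j|\mathcal I)=j$ guarantees capture for every pursuer arrival time $t\le\min_{k\in\mathcal I}\mathcal L_j(k)$, so $\mathcal D(j|\mathcal I)\ge\min_{k\in\mathcal I}\mathcal L_j(k)$, which is \eqref{eq:bndIntUgs}.

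The only genuinely delicate point --- and the one I would be most careful about --- is the bookkeeping around the waiting rule: one must confirm that the pursuer, while repeatedly choosing to wait, is in fact present at $j$ at the \emph{precise} instant the evader passes, rather than having moved on. This is exactly where the two standing structural assumptions are used, namely unit evader speed and acyclicity of $G$, which (as noted just after \eqref{eq:greenUpdate}) together force the pursuer's waiting time to be determined purely by the evader arrival times $\mathcal L_j(\cdot)$ and the observations, so that the decision epochs coincide with the only times at which the evader could possibly reach $j$. Once that alignment is in hand, every remaining step is immediate.
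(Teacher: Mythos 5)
Your proposal is correct and follows essentially the same route as the paper: since $\mathcal I\subseteq\mathcal P_j$ forces every consistent path through $j$, arriving at $j$ by time $\min_{k\in\mathcal I}\mathcal L_j(k)$ and waiting guarantees the pursuer is present when the evader passes, so that arrival time is achievable and lower-bounds $\mathcal D(j|\mathcal I)$. The paper states this in two sentences; your extra bookkeeping on the waiting epochs via \eqref{eq:decTime} and \eqref{eq:greenUpdate} just makes explicit what the paper leaves implicit.
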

\begin{proof}
Since $\mathcal I\subseteq \mathcal{P}_j$, all the paths in the uncertainty set $\mathcal I$ go through node $j$. So, the pursuer can guarantee capture by
arriving at node $j$ at time $t=\min_{k\in\mathcal I}\mathcal L_j(k)$, which is the earliest time that the evader can pass through node $j$ by taking any path, $k\in\mathcal I$.
\end{proof}
\subsection{Max-Min Optimization}
\label{sec:DP}
Suppose the pursuer is at UGS index $p$ with path information $\mathcal I$ and decides to visit $u$ next.
Upon reaching $u$, the information will change to: $\mathcal I^+(u,y)$, where $y$ is the observation that the pursuer will make at $u$. Recall that $\mathcal I^+(u,y)$ is updated according to \eqref{eq:redUpdate} and \eqref{eq:greenUpdate} for the red and green UGS observations respectively. By definition, $\mathcal D({u}|\mathcal I^+(u,y))$ is the latest time at which, armed with the new information $\mathcal I^+(u,y)$, the pursuer can arrive at/leave $u$ and still guarantee capture of the evader. So, the latest time that the pursuer can leave $p$ and still capture the evader should satisfy the Recursive Equation (RE):
\begin{equation}
\label{eq:1stepTime}
\mathcal D({p}|\mathcal I) = \max_{u\in\mathcal U} \left[\min_{y\geq -1}\mathcal D({u}|\mathcal I^+(u,y))-d_V(p,u)\right].
\end{equation}
This is so, because before visiting $u$ the pursuer cannot know whether the observation will be a red or green UGS. Hence, to guarantee capture, it has to assume the worst-case scenario that will result in the smaller of two possible pursuer exit times at $u$. To compute the latest pursuer exit time from $p$, we subtract the travel time from $p$ to $u$. Finally, we take the $\max$ over all possible nodes to get the latest possible exit time from $p$ with a capture guarantee. Per our convention, the corresponding optimal control, $\mu({p}|\mathcal I)=u^*$, where $u^*$ is the maximizing control in \eqref{eq:1stepTime}.
We will use RE \eqref{eq:1stepTime} to compute $\mathcal D({1}|\mathcal I_0)$. Before doing so, we introduce a control constraint, $u\in\mathcal B({\mathcal I})\subset\mathcal U$ in \eqref{eq:1stepTime} that will enable us to compute $\mathcal D({1}|\mathcal I_0)$ in an orderly recursive fashion.  In the next section, we will show that this constraint does not result in any loss in optimality.

\section{Ordered Recursive Solution}
\label{sec:DPsol}
Consider the simplest possible scenario: $n=1$ i.e., there is only one path from the start node $1$ to some exit node, $s_1^{\ell_1}\in\mathcal G$. To guarantee capture, it is sufficient for the pursuer to get to $s_1^{\ell_1}$ no later than the time that the evader gets there. So, the maximum delay at node $1$ with a capture guarantee  is given by,
\begin{equation}
\label{eq:1path}
\mathcal D(1|\{1\}) = |P_1|-d_V(1,s_1^{\ell_1})>0,
\end{equation}
where the inequality follows from the pursuer speed advantage assumption \eqref{eq:spdAdv}. The optimal policy dictates, $\mu(1)=s_1^{\ell_1}$. For $n=1$, there is no uncertainty in the evader's path and so, the pursuer heads straight to the exit node $s_1^{\ell_1}$ and ``captures" the evader. 
This scenario is also reflected in \eqref{eq:bndMaxDelAll}, where the evader's path $k$ is known to the pursuer. Since we are interested in the case where there is uncertainty in the path, our only recourse is to the RE \eqref{eq:1stepTime}, as applied to node $1$ under information, $\mathcal I_0=\{1,\ldots,n\}$, and so,
\begin{equation}
\label{eq:1stepTime0}
\mathcal D(1|\mathcal I_0) = \max_{u} \left[\min_{y\geq -1}\mathcal D({u}|\mathcal I^+(u,y))-d_V(1,u)\right]
\end{equation}
As mentioned earlier, the above equation is recursive in nature. The only exception is the case where the uncertainty set's cardinality is $1$, whereupon \eqref{eq:bndMaxDelAll} provides us the values of:
\begin{equation}
\label{eq:unSetCard1}
\mathcal D(j|\{k\}),\; j=1,\ldots,m,\;k=1,\ldots,n.
\end{equation}
A natural question that arises is the following: could we compute the exit times corresponding to uncertainty sets of cardinality $2$ given \eqref{eq:unSetCard1}? The answer is yes and
\begin{figure}[h]
\centering
\begin{tabular}{c}
\subfloat[][Road Network on a Grid with Coordinates]{\includegraphics[width=\linewidth]{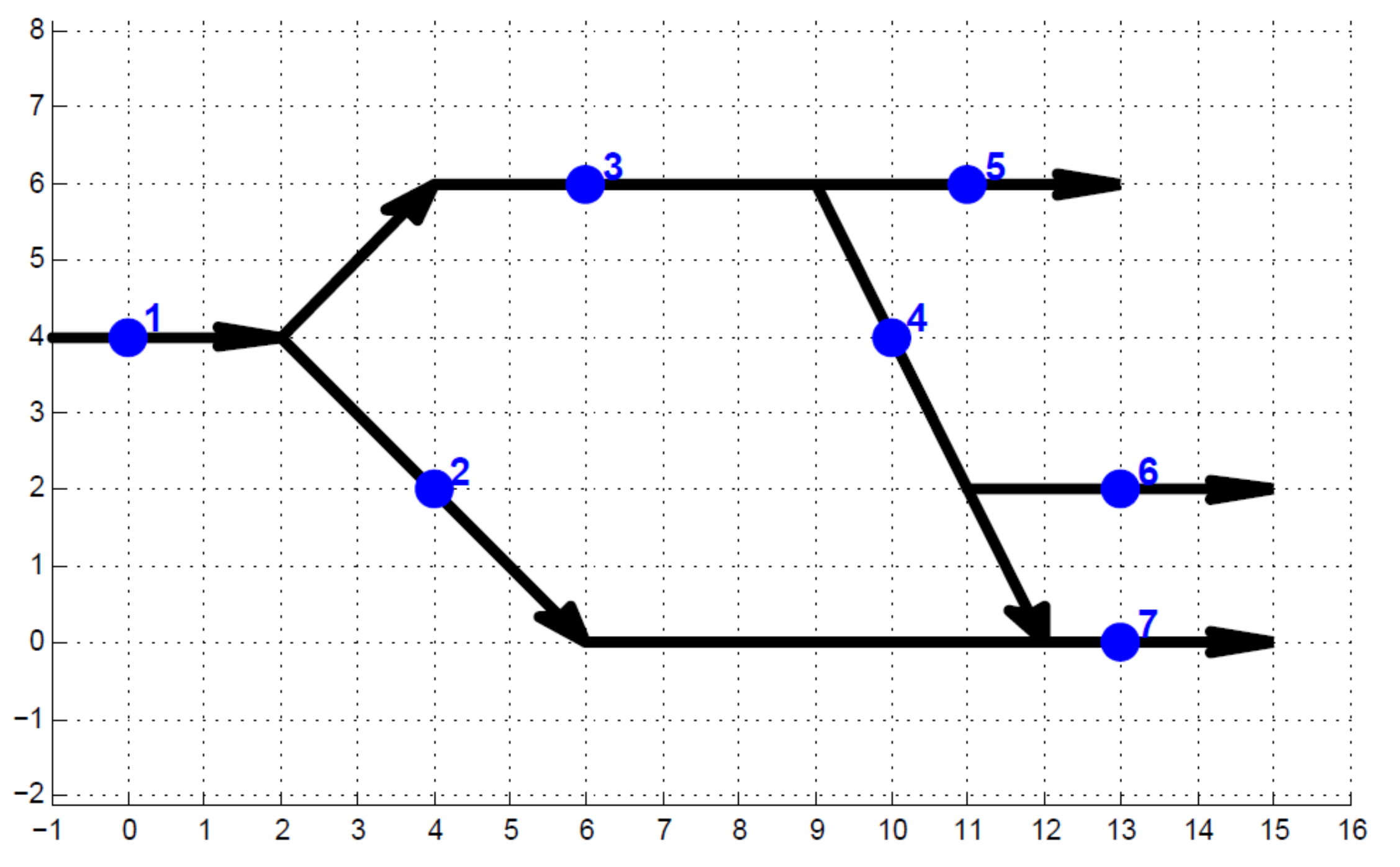}\label{sampleRdGrid}}\\
\subfloat[][Evader Paths showing nodes and time]{\includegraphics[width= \linewidth]{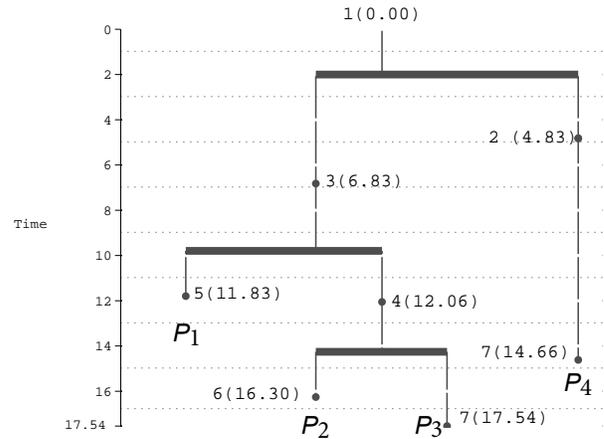}\label{EvaderPaths}}
\end{tabular}
\caption{Example Road Network: a) Grid and b) $4$ Possible Evader Paths} \label{fig:graph-grid}
\end{figure}
we illustrate the simple case of the uncertainty set $\{2,3\}$
and then generalize the method to sets of higher cardinality. Towards this end, we re-draw the example road network in Fig.~\ref{sampleRdGrid}, with a grid in the background, to highlight the $(x,y)$ coordinates of nodes and  the distances along edges. In Fig.~\ref{EvaderPaths}, we show the four different evader paths (ordered from left to right) along with the evader's time of arrival $\mathcal L _j(k)$ (in parentheses) at nodes along each path. Indeed, $P_1=(1\rightarrow 3\rightarrow 5)$, $P_2=(1\rightarrow 3\rightarrow 4\rightarrow 6)$, $P_3=(1\rightarrow 3\rightarrow 4\rightarrow 7)$ and $P_4=(1\rightarrow 2\rightarrow 7)$.

Suppose we wish to compute $\mathcal D(6|\{2,3\})$ having already computed the values, $\mathcal D(6|\{2\})=|P_2|$ and $\mathcal D(6|\{3\})=|P_3|-d_V(6,7)$ from \eqref{eq:bndMaxDelAll}. Here, $|P_2|<|P_3|$ as shown in Fig.~\ref{EvaderPaths}.
To guarantee capture, the pursuer waits at node $6$ until time $|P_2|$. If the evader does not show up at $6$, then the pursuer knows that the evader has taken path $3$ instead. So, the pursuer proceeds to node $7$ and intercepts the evader at  time $|P_3|$. But this is possible only if the following condition is met:
\begin{equation}
\label{eq:go6to7con}
|P_2|+d_V(6,7)<|P_3|.
\end{equation}
Let us suppose that condition \eqref{eq:go6to7con} holds. So, $\mathcal D(6|\{2,3\}) = |P_2|$. 
For all nodes $j\in\{1,\ldots,m\}$ and the uncertainty set $\mathcal I=\{2,3\}$, we have from the RE \eqref{eq:1stepTime}:
\begin{equation}
\label{eq:card2ex2}
\mathcal D(j|\mathcal I)= \max_{u\in\mathcal B} \left[\min\left\{\mathcal D({u}|\mathcal I^r(u)),\mathcal D({u}|\mathcal I^g(u))\right\}
-d_V(j,u)\right],
\end{equation}
where $\mathcal I^r(u) = \mathcal I\cap\mathcal P_u$ and $\mathcal I^g(u)=\mathcal I\backslash\mathcal I^r(u)$ are the updated uncertainty sets for the red and green observations respectively at $u$. We define the restriction $\mathcal B\subset\mathcal U$ as follows. The node $u\in\mathcal B$ if the following conditions are met:\\
1) $\mathcal I^r(u)$ and $\mathcal I^g(u)$ are both singleton sets, and\\
2) $\mathcal D({u}|\mathcal I^g)\geq\min_{k\in \mathcal I^r(u)}\mathcal{L}_u(k)$.\\
Condition 1) implies that $u$ is a special UGS in that, by going to it, the pursuer can reduce the uncertainty set $\{2,3\}$ to either $\{2\}$ or $\{3\}$ depending on whether it observes a red or green UGS (or vice-versa) at $u$. Condition 2) requires that the latest exit time from $u$ for the uncertainty set $\mathcal I^g(u)$ (green observation) must be greater than the earliest possible evader visit time at $u$. Note that this requirement is already satisfied for the red observation (see Lemma~\ref{lem:lowbnd}). In other words, the pursuer will visit $u$ only if there is a possibility that information will become available on whether or not the evader took a path through $u$. Else, there is no value in visiting $u$ and one may as well ignore it. Furthermore, capture is guaranteed from $u$ for either observation, with the corresponding pursuer exit times given by $\mathcal D({u}|\mathcal I^r(u))$ and $\mathcal D({u}|\mathcal I^g(u))$. In the example (see Fig.~\ref{fig:graph-grid}), $\mathcal B=\{6\}$ $\forall j\in \mathcal U$, since visiting node $6$ at time $|P_2|$ can reduce the uncertainty from $\{2,3\}$ to either $\{2\}$ or $\{3\}$ and capture is guaranteed thereafter for either observation. Note that $7\notin\mathcal B$ since it fails condition 2) in that:
\begin{equation*}
\mathcal D(7|\{2\})=|P_2|-d_V(7,6)<|P_2|<|P_3|.
\end{equation*}
So, for the uncertainty set $\{2,3\}$, it makes no sense for the pursuer to go to $7$; since it has to leave $7$ (before time $|P_3|$) with the uncertainty unchanged. Given the triangle inequality constraint \eqref{eq:triangle}, it is therefore sub-optimal for the pursuer to visit node $7$. So, we have justified the restriction $\mathcal B$ in \eqref{eq:card2ex2}, which enables us to compute all the nodes' exit times for the uncertainty set $\{2,3\}$ of cardinality $2$ from the exit times for uncertainty sets of lower cardinality. 

In general, a similar restriction allows us to compute the pursuer exit times for uncertainty sets in an orderly fashion (in the increasing order of cardinality). Let the set of all possible uncertainty sets be $\mathcal Z = 2^{\mathcal I_0}\setminus\emptyset$. We denote the elements of $\mathcal Z$ of cardinality $i$ by $\mathcal I_i^1,\ldots,\mathcal I_i^{o_i}$, where $o_i=\binom{n}{i}$. For instance, $\mathcal I_n^1=\mathcal I_0$. At the other extreme, we have $\mathcal I_1^k=\{k\},\;k=1,\ldots,n$.
Suppose $\mathcal D({j}|\mathcal I_i^q)$ has been computed for $q=1,\ldots,o_i$, $\forall j$ and $i=1,\ldots,r$ for some $r\geq 1$. Then, $\forall q\in\{1,\ldots,o_{r+1}\},\forall j\in\mathcal U,$
\begin{equation}
\label{eq:1stepTime1}
\mathcal D({j}|\mathcal I_{r+1}^q) = \max_{u\in\mathcal B({\mathcal I}_{r+1}^q)} \left[\min_{y\geq -1}\mathcal D({u}|\mathcal I^+(u,y))-d_V(j,u)\right].
\end{equation}
Let $\mathcal I^r(u) = \mathcal I_{r+1}^q\cap\mathcal P_u$ and $\mathcal I^g(u)= \mathcal I_{r+1}^q\backslash\mathcal I^r(u)$. The three distinct possibilities at $u$ are:
\begin{enumerate}
\item[1)] $\mathcal I^r(u) = \mathcal I_{r+1}^q$ which implies that the evader must pass through UGS $u$.
\item[2)] $\mathcal I^r(u) \subset \mathcal I_{r+1}^q$ which implies that the uncertainty is reduced at $u$ for both red and green observations.
\item[3)] $\mathcal I^r(u) = \emptyset$ which implies that a green UGS is the only possible observation at $u$.
\end{enumerate}
We define the restriction $\mathcal B({\mathcal I}_{r+1}^q)\subset \mathcal U$ as follows. A node $u\in\mathcal B({\mathcal I}_{r+1}^q)$ if $\mathcal I^r(u) \subseteq \mathcal I_{r+1}^q$ and the following condition is satisfied:
\begin{equation}
\label{eq:greenCap}
\mathcal D({u}|\mathcal I^g(u))\geq\min_{k\in \mathcal I^r(u)}\mathcal{L}_u(k) \mbox{ if }\mathcal I^r(u) \subset \mathcal I_{r+1}^q.
\end{equation}
Note that the above result already holds, if the observation is a red UGS (see Lemma~\ref{lem:lowbnd}). 

The restriction above implies the following: the pursuer will visit $u$ only if one of two things happen. Either capture if possible at $u$ or the uncertainty is reduced at $u$ with capture guaranteed for either observation (red or green). 
The third possibility 3) implies that the only possible observation at $u$ is a green UGS with no reduction in the uncertainty! Clearly, in this case, there is no information to be gained by visiting $u$ and hence it can be removed from consideration. 

Furthermore, from the triangle inequality constraint \eqref{eq:triangle}, it follows that the only reason to visit $u$ under possibility 1) is to immediately capture the evader at $u$. As before, there is no value in visiting $u$ otherwise, since there is no additional information available at $u$. So, we have the following result.
\begin{lemma}
\label{lem:samecard}
If the optimal control $u$ to \eqref{eq:1stepTime1} is such that  $\mathcal I^r(u) = \mathcal I_{r+1}^q$, then capture occurs at $u$. So, we have $\mathcal D({u}|\mathcal I^r(u))=\min_{k\in\mathcal I^r(u)}\mathcal L_u(k)$. 
\end{lemma}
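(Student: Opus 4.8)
The plan is a contradiction argument resting on the triangle inequality \eqref{eq:triangle}, with Lemma~\ref{lem:lowbnd} supplying one half of the claimed equality. Since $\mathcal I^r(u)=\mathcal I_{r+1}^q\subseteq\mathcal P_u$, Lemma~\ref{lem:lowbnd} already gives $\mathcal D(u|\mathcal I^r(u))\geq\min_{k\in\mathcal I^r(u)}\mathcal L_u(k)$, and arriving at $u$ exactly at that time intercepts the evader whenever the evader is on a path attaining the minimum. So the whole content is to show that the pursuer cannot afford to arrive at $u$ any later, and that the only productive outcome of a visit to $u$ in this situation is immediate capture; together these force $\mathcal D(u|\mathcal I^r(u))=\min_{k\in\mathcal I^r(u)}\mathcal L_u(k)$.

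First I would characterize what a visit to $u$ can accomplish once $\mathcal I^r(u)=\mathcal I_{r+1}^q$: every path consistent with $\mathcal I_{r+1}^q$ runs through $u$, so the pursuer already knows the evader will visit $u$. If it arrives before the evader, \eqref{eq:greenUpdate} returns $\mathcal I^+(u,-1)=\mathcal I_{r+1}^q$ unchanged; if it arrives together with the evader, capture occurs; if it arrives after the evader has passed, \eqref{eq:redUpdate} returns the paths through $u$ whose visit time matches the observed one, which (the graph being acyclic and the evader moving at unit speed) is determined by the evader's visit time at any downstream UGS of the path it is actually traversing, hence is recoverable from later readings. Then I would argue by contradiction: if $u$ were the optimal control in \eqref{eq:1stepTime1} and capture did \emph{not} occur at $u$, the policy attaining $\mathcal D(j|\mathcal I_{r+1}^q)$ would have the pursuer leave $u$ for some $v=\mu(u|\mathcal I^+(u,y))\neq u$ after its worst-case reading there; traveling instead from $j$ straight to $v$, the pursuer reaches $v$ no later --- by \eqref{eq:triangle}, $d_V(j,v)\leq d_V(j,u)+d_V(u,v)$ --- and with at least as much information, since the reading at $v$ subsumes the discarded reading at $u$, so it still guarantees capture with a departure time from $j$ no smaller than $\mathcal D(j|\mathcal I_{r+1}^q)$. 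Hence a non-capturing $u$ cannot be a strictly best control, so the optimal control $u$ with $\mathcal I^r(u)=\mathcal I_{r+1}^q$ must capture at $u$; and since the evader may be on a path attaining $\min_{k\in\mathcal I^r(u)}\mathcal L_u(k)$, the pursuer must be at $u$ no later than that instant, which with the lower bound gives $\mathcal D(u|\mathcal I^r(u))=\min_{k\in\mathcal I^r(u)}\mathcal L_u(k)$.

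The main obstacle is making the step ``the reading at $v$ subsumes the reading at $u$'' watertight: the continuation $v$ is chosen \emph{after} the worst-case reading at $u$, so I must verify that committing to $v$ \emph{before} that reading is safe for every realization, using that a narrower information set only relaxes the deadline, $\mathcal D(v|\mathcal I')\geq\mathcal D(v|\mathcal I'')$ whenever $\mathcal I'\subseteq\mathcal I''$, together with \eqref{eq:bndMaxDelAll} and the speed advantage \eqref{eq:spdAdv}. A secondary subtlety is ties: when the detour through $u$ cannot be shown \emph{strictly} worse, the cleanest reading of the conclusion is that there is an optimal policy in which the visited UGS is the site of capture --- the form actually needed to keep the ordered recursion well posed. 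The remaining steps are routine substitutions into \eqref{eq:redUpdate}, \eqref{eq:greenUpdate} and \eqref{eq:1stepTime}.
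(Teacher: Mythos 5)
Your argument is essentially the paper's own: the paper justifies Lemma~\ref{lem:samecard} with exactly the triangle-inequality domination remark that precedes it (a detour through a node $u$ at which no capture occurs is never better than heading straight to the continuation node), combined with the lower bound from Lemma~\ref{lem:lowbnd} to pin down $\mathcal D(u|\mathcal I^r(u))$ as $\min_{k\in\mathcal I^r(u)}\mathcal L_u(k)$. Your write-up is in fact more explicit than the paper's one-sentence justification, and the obstacle you flag --- that the continuation is chosen only after the worst-case reading at $u$, so skipping $u$ requires showing the reading at $v$ loses nothing --- is glossed over by the paper as well.
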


In conclusion, we note that the uncertainty is either reduced or capture occurs in the next decision epoch. Since the pursuer exit times are already available for uncertainty sets of lower cardinality (former) and it is provided by Lemma~\ref{lem:samecard} for the latter case, we can compute $\mathcal D({j}|\mathcal I_{r+1}^q)$. Finally, to compute $\mathcal D(1|\mathcal I_0)$, we employ the following Ordered Recursive Algorithm (ORA).
\begin{algorithm}{ORA}{\label{sec:algo}}
\qfor $j \qlet 1$ \qto $m$\\
\qfor $k \qlet 1$ \qto $n$\\
 $\mathcal D({j}|\{k\})=|P_k|-d_v(j,s_k^{\ell_k})$\qrof\qrof\\
\qfor $i \qlet 2$ \qto $n-1$\\
\qfor $q \qlet 1$ \qto $o_i$\\
\qfor $j \qlet 1$ \qto $m$\\
 Compute $\mathcal D({j}|\mathcal I_i^q)$ using \eqref{eq:1stepTime1} \qrof\qrof\qrof\\
Compute $\mathcal D(1|\mathcal I_0)$ using \eqref{eq:1stepTime1} \\
 \qreturn $\mathcal D(1|\mathcal I_0)$
\end{algorithm}
Note that the optimal pursuit strategy is constrained to enforce a reduction in entropy! Indeed the entropy i.e., the cardinality of the uncertainty set will reduce, at least by $1$, for every move (including waiting) made by the pursuer. As a result, the game will terminate in no more than $n$ steps/moves! The Algorithm~\ref{sec:algo} has a time complexity of $\mathcal{O}(2^nm\log m)$. This is due to the number of all possible uncertainty sets: $2^n-1$, the number of nodes for which the exit time is computed: $m$ and the time complexity of the $\max$ operation: $\log m$.

\subsection{Pursuer Decision Tree}
\label{sec:dtree}
To evaluate the iterative algorithm prescribed earlier, we implement it on the example problem shown in Fig.~\ref{fig:graph-grid}. We assume that the pursuer travels between any two nodes at a constant speed, $V$. We choose the speed such that \eqref{eq:go6to7con} is satisfied, i.e.,
\begin{eqnarray}
\label{eq:spdChoice}
|P_3|-|P_2|&>&d_V(6,7)=\frac{2}{V}\nonumber\\
\Rightarrow V &>& \frac{2}{\sqrt{5}-1} \approx 1.618,
\end{eqnarray}
where the distance between nodes $6$ and $7$ equals $2$ (see Fig.~\ref{sampleRdGrid}). So, we choose $V=1.62$ and implement Algorithm~\ref{sec:algo}. Fig.~\ref{fig:decTree} shows the decision tree for the pursuer starting with a red UGS at node $1$. The solution dictates that $\mathcal D(1,\{1,2,3,4\})\approx 4.84$ and $\mu(1,\{1,2,3,4\})=3$. Fig.~\ref{fig:decTree} also shows (color coded) the latest pursuer exit times at future nodes visited by the pursuer, for both red and green observations. Eventually, capture of the evader occurs at one of the exit nodes, $5$, $6$ or $7$. Interestingly, the optimal evader path that contributes to the least pursuer exit time at node $1$ is $P_1=(1\rightarrow 3\rightarrow 5)$, which is also the shortest path, i.e., $1=\argmin_k |P_k|$!

If we pick $V=1.61$ instead, we get the decision tree shown in Fig.~\ref{fig:decTree1}. In this case, the maximum delay at node $1$ with a capture guarantee reduces to $\approx 2.9$. This is so because the slower moving pursuer has to capture the evader at node $3$ itself, if the evader picks any path other than $P_4$. In other words, node $3$ acts like an exit node under the reduced speed. If one were to reduce the pursuer speed even further, below some critical speed, $\underline{V}$, the algorithm will return $\mathcal{D}(1,\{1,2,3,4\})=0$, indicating that no initial delay can be tolerated at node $1$ for any speed $V<\underline{V}$. At the other extreme, one can easily confirm that if the pursuer is able to travel at infinite speed, the corresponding $\mathcal{D}(1,\{1,2,3,4\})=|P_1|$, the earliest evader exit time.

\begin{figure}[h]
\begin{center}
\includegraphics[width=\linewidth]{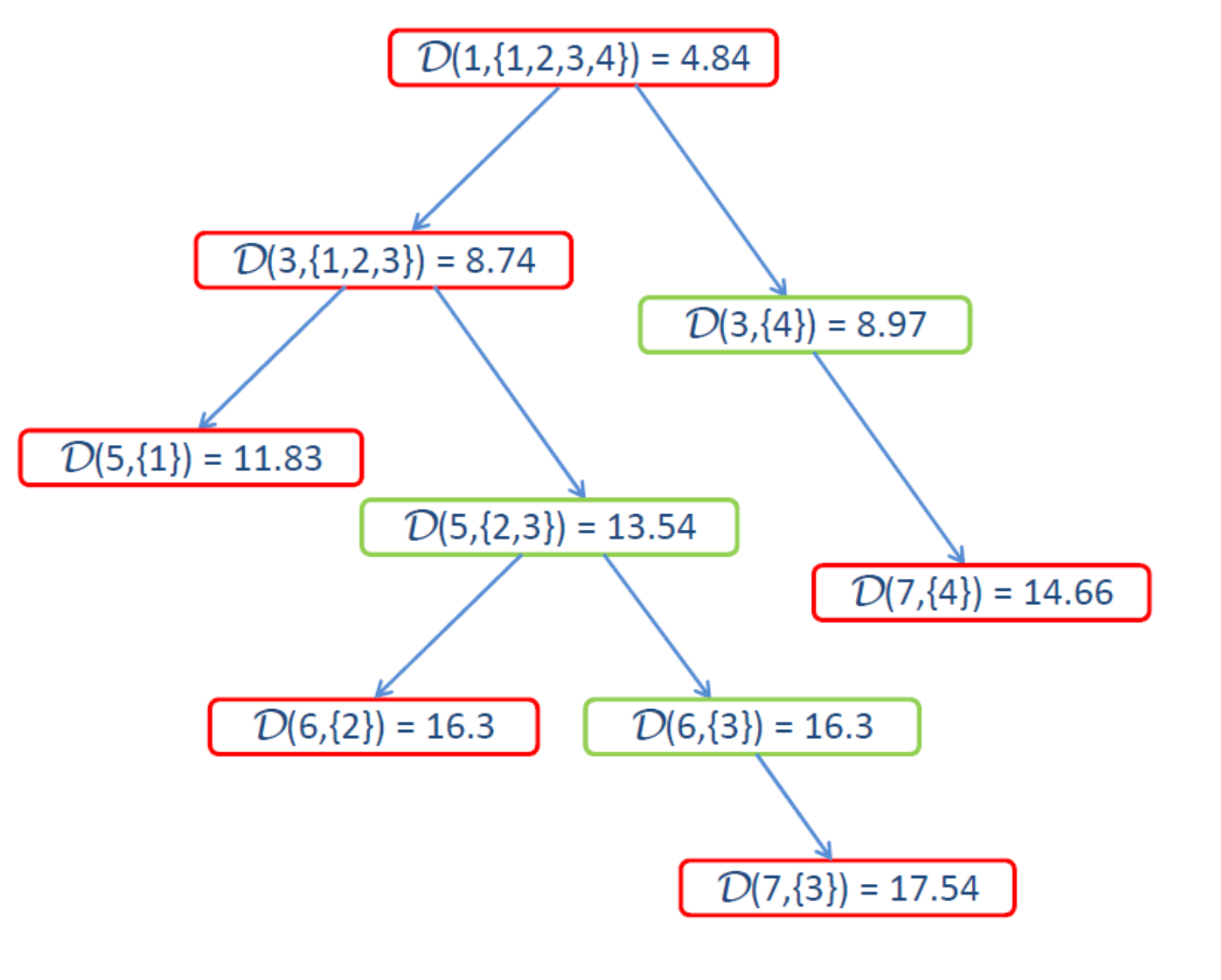}
\caption{Decision Tree and Latest Exit Times for $V=1.62$} \label{fig:decTree}
\end{center}
\end{figure}
\begin{figure}[h]
\begin{center}
\includegraphics[width=0.8\linewidth]{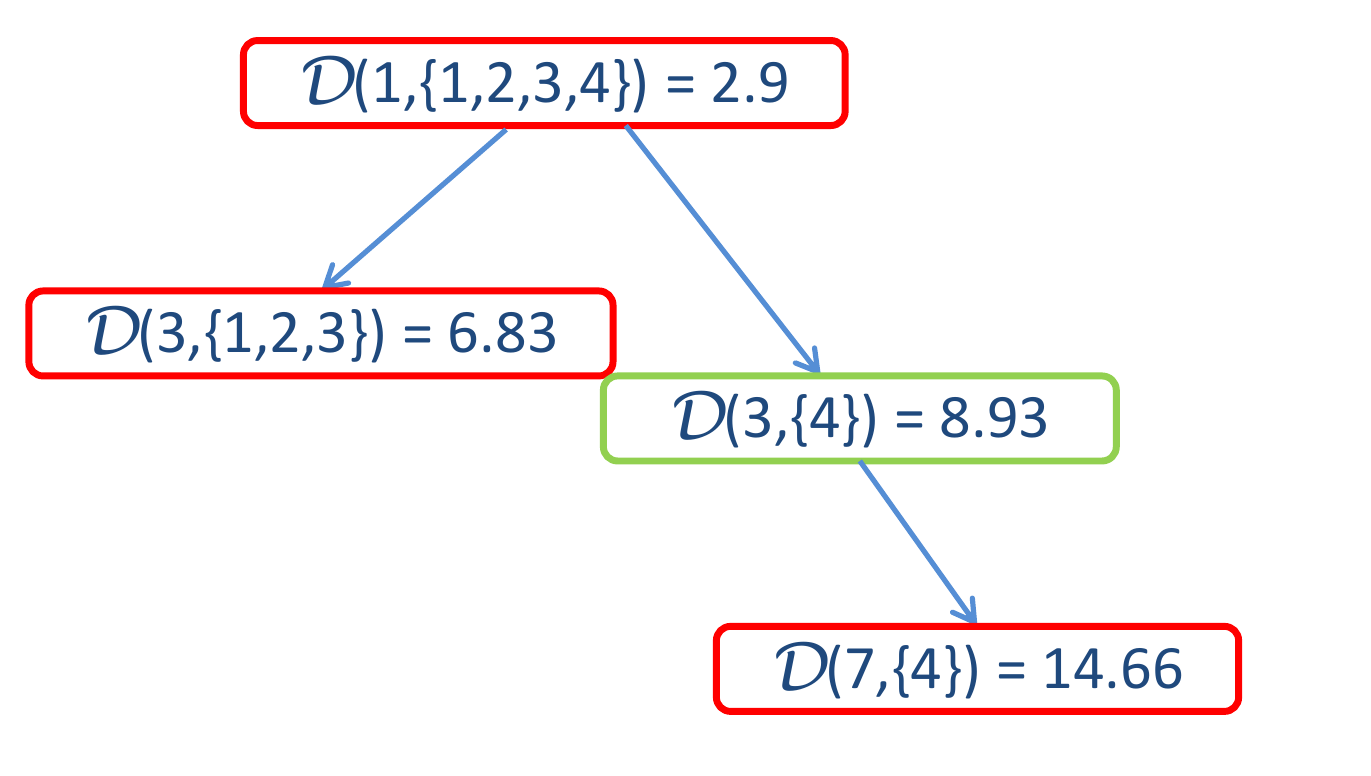}
\caption{Decision Tree and Latest Exit Times for $V=1.61$} \label{fig:decTree1}
\end{center}
\end{figure}

%
\subsection{Reducing the Computational Burden}
\label{sec:campAtt}
Since the algorithm scales exponentially with the number of possible evader paths, we explore avenues that reduce the computation time. We note that for a given graph, $G(\mathcal U, E)$, certain uncertainty sets will never be encountered by the pursuer if it employs a ``guaranteed capture" policy. 
For instance, in the example problem (see Fig.~\ref{EvaderPaths}), the pursuer will never encounter the uncertainty set $\{1,4\}$. The reasoning behind this goes as follows. Initially the pursuer is at node $1$ armed with the uncertainty set $\{1,2,3,4\}$. 
Now the only way the pursuer can reduce the uncertainty set to $\{1,4\}$ is by investigating node $4$ and confirming that paths $2$ and $3$ were indeed not taken. To do so, the pursuer has to (possibly) wait at node $4$ until time $\mathcal T_2(3)\approx 12.06$. But, $\mathcal T_2(3)>|P_1|$ and so, by waiting, the pursuer will necessarily allow the evader to escape via path $1$!
Indeed, it is possible to enumerate all the \emph{realizable} uncertainty sets, that the pursuer will encounter in its search.

For the example problem, the realizable sets listed in Table~\ref{tab:realSets}, are computed in the following manner. At time $0$, the only information available at UGS $1$ is $\{1,2,3,4\}$. At time $\mathcal T_4(2)\approx 4.83$, information is available at UGS $2$ that can reduce the uncertainty to either $\{4\}$ or $\{1,2,3\}$ depending on whether it is red or green. At time $|\mathcal P_1|\approx 11.83$, information is available at UGS $5$ about whether or not the evader took path $1$. Hence the following additional uncertainty sets can be realized: $\{1\},\{2,3,4\}$ and $\{2,3\}$. Note that for any time greater than $|\mathcal P_1|$, $1$ can no longer appear in a uncertainty set, since it would imply that the evader has escaped. This is reflected in the table (see entries after row $4$). We continue the aforementioned procedure to enumerate the sets, until the last UGS/time combination, i.e., $(7,|\mathcal P_3|)$. Upon completing the table, we collect all the sets that appear in column $2$ of Table~\ref{tab:realSets}. This gives us the set of all realizable sets: $\mathcal Y=\{\{1\},\{2\},\{3\},\{4\},\{2,3\},\{1,2,3\},\{2,3,4\},\{1,2,3,4\}\}$.

\begin{table}[ht]
\caption{Realizable Uncertainty Sets at different UGSs in Chronological Order}
\label{tab:realSets}
\centering
        \begin{tabular}{|l|l|}
        \hline
       (UGS, Time) & Realizable Sets \\
        \hline
        $(1,0.00)$ & $\{1,2,3,4\}$\\
        $(2,4.83)$ & $\{1,2,3,4\},\{4\},\{1,2,3\}$\\
        $(3,6.83)$ & $\{1,2,3,4\},\{4\},\{1,2,3\}$\\
        $(5,11.83)$ & $\{1,2,3,4\},\{4\},\{1,2,3\},\{1\},\{2,3,4\},\{2,3\}$\\
        $(4,12.06)$ & $\{4\},\{2,3,4\},\{2,3\}$\\
        $(7,14.66)$ & $\{4\},\{2,3,4\},\{2,3\}$\\
        $(6,16.30)$ & $\{2,3\},\{2\},\{3\}$\\
        $(7,17.54)$ & $\{3\}$\\       
\hline                                
       \end{tabular}
\end{table}
So, we only deal with $8$ sets, as opposed to the $2^4-1=15$ possible combinations. We can now selectively apply Algorithm~\ref{sec:algo}, so that only $\mathcal D(j|\mathcal I),\;\forall\mathcal I\in\mathcal Y$ are computed. Note that there is no loss in optimality, by skipping the non-realizable sets. For a general graph, the reduction in number of sets depends on the structure of the graph. Nonetheless, for large $n$, any reduction from $2^n-1$ could lead to substantial savings in computation time.
\subsection{Partial Information, Dynamic Game, and Dual Control}

We are calculating the maximal allowable delay at UGS $1$ s.t. a pursuit strategy exists which guarantees the evader's capture before the latter reaches one of the 
the goal nodes, $j\in\mathcal G$. This is a deterministic  pursuit-evasion game on a directed acyclic finite graph where the evader's strategy is open-loop control and the pursuer has partial information. Such a game was previously considered in \cite{Krishnamoorthy2013aa,Krishnamoorthy2013ab}, where the highly structured graph considered therein, was a Manhattan grid. Due to the pursuer's information pattern, which is restricted to partial observations of the physical state of the dynamic game, we are running into the difficulties brought about by the \emph{dual control} effect \cite{Basar}, where the current information state determines the pursuer's optimal control while at the same time the information that will become available to the pursuer will be in part determined by his current control. Things are not made easier by the ``minimum time" control flavor of the optimization problem at hand and these difficulties are particularly exacerbated in the context of our dynamic game setting. A solution exists because the optimization problem is discrete and finite but the computational complexity of the algorithm is high.

\section{Conclusions}

The optimal control of a pursuer with limited sensing capability tasked with intercepting a blind evader on a road network instrumented with UGS is considered. The pursuer is interrogating the UGS, some of which were triggered by the passing evader, and as such has access to partial observations only of the physical system's state. Specifically, the maximal allowable delay at an UGS  s.t. a pursuit strategy exists which guarantees the evader's capture before the latter reaches his goal $\mathcal G$ is calculated and the attendant pursuit strategy is obtained. Thus, a deterministic  pursuit-evasion game on a directed acyclic finite graph where the blind evader's strategy is open-loop control and the pursuer has partial information, is solved. Due to the pursuer's information pattern, which is restricted to partial observations of the physical state of the deterministic game at hand, the difficulties brought about by partial information in a dynamic game setting  and the attendant $dual$ $control$ effect, could not be avoided; whence the computational complexity of the solution algorithm. However, in the process of  establishing the maximal delay at UGS $1$ s.t. capture of the evader is possible, the maximal delays for guaranteed capture at all the UGS that are on the $n$ paths emanating from UGS $1$ are also calculated. Finally, the scenario where there are no goal vertices but the directed acyclic graph is infinite is also of interest.

\label{sec:con}

\bibliography{ref_maxdel}
\bibliographystyle{IEEEtran}

\end{document}